\newtheorem{theorem}{Theorem}[section]
\newtheorem{lemma}[theorem]{Lemma}
\newcommand{\bz}{\mathbb{Z}}
\newcommand{\bq}{\mathbb{Q}}
\newcommand{\bc}{\mathbb{C}}
\newcommand{\wt}{\widetilde}
\newcommand{\SL}{\textrm{SL}}
\newcommand{\Flag}{\textrm{Flag}}
\begin{document}
\baselineskip=15.5pt
\title[Maps between complex Grassmann manifolds]{Maps between certain complex Grassmann manifolds}  
\author[P. Chakraborty]{Prateep Chakraborty}
\author[P. Sankaran]{Parameswaran Sankaran}
\address{The Institute of Mathematical Sciences, CIT
Campus, Taramani, Chennai 600113, India}
\email{prateepc@imsc.res.in}
\email{sankaran@imsc.res.in}
\subjclass[2010]{55S37, 57T15.\\
Keywords and phrases: Complex Grassmann manifolds, rational homotopy.}

\date{}

\begin{abstract}   Let $k,l,m,n$ be positive integers such that $m-l\ge l>k, m-l>n-k\ge k$ and $m-l>2k^2-k-1$. 
Let $G_{k}(\mathbb{C}^n)$ denote the Grassmann manifold of $k$-dimensional vector subspaces of $\bc^n$.  
We show that any continuous map $f:G_{l}(\bc^m)\to G_{k}(\mathbb{C}^n)$ is rationally null-homotopic.   
As an application, we show the existence of a point $A\in G_{l}(\bc^m)$ such that the vector space 
$f(A)$ is contained in $A$; here $\mathbb{C}^n$ is regarded as a vector subspace of $\mathbb{C}^m\cong \bc^n\oplus\bc^{m-n}.$  
\end{abstract}
\maketitle
%%%%%%%%%%%%%%%%%%%%%%%%%%%%
%\newpage
\section{Introduction}

Let $U(n)\subset GL(n,\mathbb{C})$ denote the unitary group and let $G_{k}(\bc^n)$ denote the homogeneous space $G_{n,k}=U(n)/U(k)\times U(n-k).$  The smooth manifold $G_{k}(\bc^n)$ is the complex Grassmann manifold of $k$-dimensional vector subspaces of $\bc^n$.  It is simply connected and has the structure of a smooth projective variety of (complex) dimension is $k(n-k)$.  To simplify notation we shall hereafter write $G_{n,k}$ to mean $G_k(\bc^n)$  
since we will only be concerned with complex Grassmann manifolds in this paper.
 
The purpose of this note is to prove the following theorem.

\begin{theorem}\label{cohomology}
Let $1\le k\le \lfloor n/2\rfloor$, $1\le l\le \lfloor m/2\rfloor$ and $k<l$, where $m,n$ are positive integers 
such that $m-l>n-k$.  Suppose that $m-l\geq2k^2-k-1$ or $1\le k\le 3$.  Then any homomorphism of 
graded rings $\phi:H^*(G_{n,k};\mathbb{Z})\to H^*(G_{m,l};\mathbb{Z})$ vanishes in positive dimensions. 
\end{theorem}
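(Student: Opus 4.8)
The plan is to exploit the explicit structure of the cohomology rings of complex Grassmannians as truncated polynomial rings generated by Chern classes. Recall that $H^*(G_{n,k};\bz)$ is generated by the Chern classes $c_1,\dots,c_k$ of the tautological $k$-plane bundle, subject to the relations coming from the fact that the total Chern class of the tautological bundle times that of the quotient bundle is trivial; the dual generators $\bar{c}_1,\dots,\bar{c}_{n-k}$ (Chern classes of the quotient bundle) satisfy $c(\xi)\cdot c(\xi^\perp)=1$. A graded ring homomorphism $\phi:H^*(G_{n,k};\bz)\to H^*(G_{m,l};\bz)$ is therefore completely determined by the images $\phi(c_i)\in H^{2i}(G_{m,l};\bz)$, and these must satisfy the image of the defining relations. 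So the first step is to write down, in $H^*(G_{m,l};\bz)$, the constraints that $\phi(c_1),\dots,\phi(c_k)$ must obey, namely that the polynomial $1+\phi(c_1)+\cdots+\phi(c_k)$ admits a multiplicative inverse that is a polynomial of degree at most $n-k$ (reflecting the relation $\bar{c}_j=0$ for $j>n-k$).

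The second step is to reduce to a \emph{low-degree} statement. The key numerical hypotheses $m-l>n-k$ and $m-l\ge 2k^2-k-1$ (or $k\le 3$) are precisely the conditions under which the relevant elements of $H^*(G_{m,l};\bz)$ live in a range where the ring looks like a genuine polynomial ring in the first few generators—that is, the truncation relations of $H^*(G_{m,l};\bz)$ do not interfere with the degrees we care about. Concretely, I would argue that $H^j(G_{m,l};\bz)$ for $j$ up to roughly $2k\cdot(\text{something})$ is a free module matching a polynomial ring $\bz[a_1,\dots,a_l]$ in the Chern classes of the tautological bundle over $G_{m,l}$, because the first relation in $H^*(G_{m,l};\bz)$ occurs in degree $2(m-l+1)$, which the hypothesis pushes above our range. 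Writing $\phi(c_i)$ as a polynomial in $a_1,\dots,a_l$ with integer coefficients, I would then translate the inverse-has-degree-$\le n-k$ condition into a system of polynomial identities among these coefficients.

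The third and most technical step—and the one I expect to be the main obstacle—is to show that this system of integrality and vanishing constraints forces $\phi(c_1)=\cdots=\phi(c_k)=0$. The heart of the matter is a degree/divisibility obstruction: if $\phi(c_1)$ were nonzero, its leading behavior would propagate through the multiplicative relation $c(\phi(\xi))^{-1}$ and produce a nonzero class in degree $2(n-k+1)$, but the requirement $m-l>n-k$ combined with the polynomial-ring range means such a class cannot be killed, contradicting $\bar c_{n-k+1}=0$. The combinatorial bookkeeping of which Newton-polynomial or Schur-class coefficients must vanish is where the bound $2k^2-k-1$ enters: it guarantees enough room for an inductive argument on the $\phi(c_i)$, handling $i=1$ first and then peeling off higher generators. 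I would carry out the induction by assuming $\phi(c_1)=\cdots=\phi(c_{i-1})=0$ and showing the inverse-degree constraint forces $\phi(c_i)=0$ as well; the case analysis for small $k$ (the alternative hypothesis $k\le 3$) would be done by direct computation since the general divisibility bound is not needed there. Once all $\phi(c_i)=0$, multiplicativity gives $\phi=0$ in positive degrees, completing the proof.
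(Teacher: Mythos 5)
Your plan is a genuinely different route from the paper's, but its third step---which you yourself flag as ``the main obstacle''---is a genuine gap rather than a deferrable technicality: it is precisely the content of the theorem of Glover and Homer \cite{glover-homer} that the paper invokes as a black box, and your sketch of it does not hold up. The heuristic that a nonzero $\phi(c_1)$ ``propagates'' to a nonzero class in degree $2(n-k+1)$ contradicting $h_{n-k+1}(\phi(c_1),\dots,\phi(c_k))=0$ is unsubstantiated for $k\ge 2$: the vanishing of one homogeneous component of $(1+x_1+\cdots+x_k)^{-1}$ is a single polynomial identity among the $x_i=\phi(c_i)$, viewed in $\bz[a_1,\dots,a_l]$ in the truncation-free range, and such an identity admits, a priori, nonzero solutions (already for $k=2$, setting $x_2=\lambda x_1^2$ reduces $h_{n-1}(x_1,x_2)=0$ to a scalar equation in $\lambda$; ruling out integral solutions compatible with \emph{all} the relations is exactly the delicate combinatorics for which Glover--Homer need the bound $2k^2-k-1$, and for which $k\le 3$ is a separate nontrivial theorem, not a ``direct computation'' you have carried out). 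Your second step is also inaccurate: the hypothesis $m-l>n-k$ only guarantees that the \emph{first} relation $h_{n-k+1}$, of degree $2(n-k+1)\le 2(m-l)$, lives below the first truncation degree $2(m-l+1)$ of $H^*(G_{m,l};\bz)$; the remaining relations $h_j$, $n-k+1<j\le n$, have degrees up to $2n$, which can exceed $2(m-l)$, so they hold only modulo the truncation ideal---a complication your inductive scheme never addresses. Note moreover that your formulation is strictly \emph{harder} than the problem Glover--Homer solved: your $\phi(c_i)$ are polynomials in $l>k$ variables, whereas their theorem concerns endomorphisms, i.e., $k$ variables. That the bound cannot be waved away by ``enough room for an induction'' is underscored by the paper's own remark that removing it even in the endomorphism case is an open conjecture.

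For contrast, the paper sidesteps all of this by a reduction to the endomorphism case: the inclusions $G_{n,k}\subset G_{m-l+k,k}\subset G_{m,l}$ (this is where $m-l>n-k$ is used) induce a surjection $\beta:H^*(G_{m-l+k,k};\bz)\to H^*(G_{n,k};\bz)$ and a homomorphism $\alpha:H^*(G_{m,l};\bz)\to H^*(G_{m-l+k,k};\bz)$ which is an isomorphism in degrees $\le 2k$, the range containing all the generators $\phi(c_i)$; the composite $\alpha\circ\phi\circ\beta$ is then an endomorphism of $H^*(G_{m-l+k,k};\bq)$. The class $\phi(c_1)$ is killed not by relation-chasing but by the cheap dimension argument (Fact (3)): $c_1^{k(n-k)+1}=0$ in the source, while no power of a nonzero degree-$2$ class vanishes in $H^*(G_{m,l};\bq)$ before degree $2\dim_{\bc}G_{m,l}>2\dim_{\bc}G_{n,k}$. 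With the endomorphism vanishing in degree $2$, the Glover--Homer theorem---where the hypothesis $m-l\ge 2k^2-k-1$ or $k\le 3$ enters, with $(m-l+k)-k=m-l$---gives vanishing in all positive degrees, and injectivity of $\alpha$ in degrees $\le 2k$ together with surjectivity of $\beta$ transfers this back to $\phi$. To salvage your direct approach you would in effect have to reprove and generalize their theorem; the efficient fix is to cite it, and the embedding trick is exactly what makes the citation applicable.
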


As a corollary to the above theorem we obtain the following result on the homotopy classification 
of maps between the complex Grassmann manifolds.    

\begin{theorem} \label{homotopy} Let $l,k,m,n$ be as in the above theorem.  Then 
the set $[G_{m,l},G_{n,k}]$ of homotopy classes of maps is finite and moreover each homotopy 
class is rationally null-homotopic.  
\end{theorem}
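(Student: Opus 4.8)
The plan is to deduce the statement from Theorem \ref{cohomology} by a rational homotopy argument, handling the rational null-homotopy and the finiteness separately. Write $X=G_{m,l}$ and $Y=G_{n,k}$; both are simply connected finite complexes, and being complex Grassmann manifolds their rational cohomology is concentrated in even degrees. Both are $F_0$-spaces (positively elliptic, with positive Euler characteristic, hence formal), so the Sullivan minimal model of $Y$ is the pure algebra $(\Lambda(x_1,\dots,x_k)\otimes\Lambda(y_1,\dots,y_k),d)$ with $|x_i|=2i$ even, $d x_i=0$, and $|y_j|$ odd with $d y_j=r_j(x_1,\dots,x_k)$ a decomposable polynomial (the defining relations of $H^*(Y;\bq)$). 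Given a continuous $f:X\to Y$, the induced $f^*$ on integral cohomology is a homomorphism of graded rings, so by Theorem \ref{cohomology} it vanishes in positive degrees; a fortiori $f^*$ vanishes on $H^{>0}(Y;\bq)$.

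To prove that $f$ is rationally null-homotopic I would realize $f$ by a morphism $\varphi$ from the minimal model of $Y$ to a Sullivan model $A$ of $X$ (say $A_{PL}(X)$), so that $f$ is rationally null-homotopic precisely when $\varphi$ is homotopic to the trivial (augmentation) morphism $\epsilon$. I would construct such a homotopy by the standard generator-by-generator obstruction method, ordering the generators so that every $x_i$ precedes every $y_j$. Once the homotopy is fixed on all earlier generators, the obstruction to extending it over a generator $v$ is a well-defined class in $H^{|v|}(X;\bq)$. For the even generators $x_i$ this obstruction is $\pm[\varphi(x_i)]=\pm f^*(x_i)=0$ by the cohomological vanishing above. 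For the odd generators $y_j$ the obstruction lies in $H^{|y_j|}(X;\bq)$ with $|y_j|$ odd, which is zero since $H^{\mathrm{odd}}(X;\bq)=0$; here one uses that $d y_j=r_j(x)$ is decomposable, so $\epsilon(r_j)=r_j(0)=0$ and the relevant element is genuinely a cocycle. Hence every obstruction vanishes, $\varphi\simeq\epsilon$, and $f_\bq$ is null-homotopic. I expect this to be the heart of the matter: the even obstructions are exactly what Theorem \ref{cohomology} kills, while the odd obstructions vanish for free because the source $G_{m,l}$ has no odd rational cohomology — without the latter (for a general source) vanishing of $f^*$ would not suffice, as Whitehead-product phenomena show.

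Finally, for finiteness I would note that $[X,Y]$ is a single fiber of the rationalization map $[X,Y]\to[X,Y_\bq]$, since every class has just been shown to map to the trivial class. As $X$ is a finite complex and $Y$ is simply connected with finitely generated homotopy groups, each such fiber is finite: climbing the Postnikov tower of $Y$, two lifts agreeing rationally differ at the $n$-th stage by a class in the kernel of $H^n(X;\pi_n Y)\to H^n(X;\pi_n Y\otimes\bq)$, which is the finite torsion subgroup of the finitely generated group $H^n(X;\pi_n Y)$, and only finitely many stages are relevant since $X$ is finite-dimensional. Therefore $[X,Y]$ is finite, which together with the previous paragraph completes the proof.
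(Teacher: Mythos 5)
Your proposal is correct, but it reaches the conclusion by a genuinely different route at both steps. For the rational null-homotopy, the paper simply quotes the rigidity theorem of Glover and Homer (\cite[Theorem 1.1]{glover-homer2}, restated as Theorem \ref{rigid}): for complex flag manifolds, $[X_0,Y_0]$ is in bijection with the set of graded $\mathbb{Q}$-algebra homomorphisms $H^*(Y;\mathbb{Q})\to H^*(X;\mathbb{Q})$, so the vanishing of $f^*$ from Theorem \ref{cohomology} immediately gives $f_0\simeq \ast$. You instead inline exactly the fragment of that theorem that is needed---injectivity at the trivial class---via a generator-by-generator homotopy construction in the pure minimal model $(\Lambda(x_1,\dots,x_k)\otimes\Lambda(y_1,\dots,y_k),d)$ of $G_{n,k}$, where the even obstructions are the classes $f^*(x_i)$, killed by Theorem \ref{cohomology}, and the odd obstructions lie in $H^{\mathrm{odd}}(G_{m,l};\mathbb{Q})=0$; this is sound (note that in this model $dy_j=h_{n-k+j}$ in the notation of the paper's presentation of $H^*(G_{n,k};\mathbb{Z})$, and since $n\ge 2k$ every odd generator has degree $2(n-k)+1>2k$, so your ordering of generators is in fact the degree ordering, consistent with the standard induction). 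Your observation that the evenness of $H^*(G_{m,l};\mathbb{Q})$ is indispensable---vanishing of $f^*$ alone would not suffice for a general source---is precisely the point concealed inside the paper's citation. For finiteness, the paper cites \cite[\S 12]{sullivan}, whereas you sketch the standard Postnikov-tower proof of that cited fact; the sketch is the usual one and is adequate, since $G_{m,l}$ is a finite complex and the relevant difference classes at each of the finitely many stages lie in the finite torsion subgroup of $H^n(G_{m,l};\pi_n G_{n,k})$, with the whole set $[G_{m,l},G_{n,k}]$ being a single fiber of the rationalization map by the first part. In sum: the paper's route is shorter and, via the full bijection of Theorem \ref{rigid}, covers all flag-manifold targets at once (including a surjectivity statement you neither prove nor need); yours is self-contained modulo standard rational homotopy theory and slightly more general, using only that the target is a formal elliptic space with pure model and that the source has no odd rational cohomology.
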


As another application of  Theorem \ref{cohomology} we obtain the following {\it invariant subspace 
theorem.}  See \cite{mukh-sank} for an analogous result for real Grassmann manifolds.  
We shall regard $\mathbb{C}^n$ as a subspace of $\mathbb{C}^m$ consisting of vectors with 
last $m-n$ coordinates zero. Thus, if $y\in G_{n,k}$ and $x\in G_{m,l}$ it is meaningful to 
write $y\subset x$.

\begin{theorem} \label{invariantsubspace}
Let $f:G_{m,l}\to G_{n,k}$ be any continuous map where $l,k,m,n$ are as in Theorem \ref{cohomology}.  
Then there exists an element $x\in G_{m,l}$ such that $f(x)\subset x$. 
\end{theorem}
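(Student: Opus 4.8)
The plan is to reinterpret the conclusion $f(x)\subset x$ as the vanishing of a section of an auxiliary complex vector bundle over $G_{m,l}$, and then to force such a zero by showing that the relevant Euler (top Chern) class is nonzero; this last computation is where Theorem~\ref{cohomology} enters. Let $\gamma$ denote the tautological $k$-plane bundle over $G_{n,k}$, and over $G_{m,l}$ let $S$ and $Q$ be the tautological $l$-plane subbundle and the rank $m-l$ quotient bundle, fitting into $0\to S\to \underline{\bc^m}\to Q\to 0$. Pulling back, $f^*\gamma$ is a $k$-plane bundle whose fibre over $x$ is $f(x)\subset \bc^n\subset\bc^m$; composing the inclusion $f^*\gamma\hookrightarrow\underline{\bc^n}\hookrightarrow\underline{\bc^m}$ with the projection $\underline{\bc^m}\to Q$ yields a bundle homomorphism $\psi\colon f^*\gamma\to Q$, i.e.\ a section of $\hom(f^*\gamma,Q)=(f^*\gamma)^*\otimes Q$. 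By construction $\psi$ vanishes on the fibre over $x$ exactly when the composite $f(x)\hookrightarrow\bc^m\twoheadrightarrow \bc^m/x$ is zero, that is, precisely when $f(x)\subset x$. Thus it suffices to prove that $\psi$ has a zero.

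Since $\hom(f^*\gamma,Q)$ is complex of rank $k(m-l)$, it is canonically oriented and its Euler class is the top Chern class $c_{k(m-l)}\big((f^*\gamma)^*\otimes Q\big)\in H^{2k(m-l)}(G_{m,l};\bz)$; if this class is nonzero, then $\psi$ can admit no nowhere-vanishing section and must therefore vanish somewhere. To compute it I would invoke Theorem~\ref{cohomology}: the graded ring homomorphism $f^*$ vanishes in all positive degrees, so $c_i(f^*\gamma)=f^*c_i(\gamma)=0$ for every $i\ge1$. Consequently, applying the splitting principle and expanding $\prod_{i,j}(a_i+b_j)$ over the Chern roots $a_i$ of $(f^*\gamma)^*$ and $b_j$ of $Q$, every summand carrying a positive elementary symmetric function of the $a_i$ drops out, leaving
\[
c_{k(m-l)}\big((f^*\gamma)^*\otimes Q\big)=c_{m-l}(Q)^{\,k}\in H^*(G_{m,l};\bz).
\]

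It remains to show $c_{m-l}(Q)^k\ne0$, and this Schubert-calculus step is the main point to verify. The class $c_{m-l}(Q)$ is the special Schubert class $\sigma_{m-l}$ corresponding to a single row of length $m-l$, the full width of the $l\times(m-l)$ box; a straightforward induction via Pieri's rule then gives $\sigma_{m-l}^{\,k}=\sigma_{((m-l)^k)}$, the Schubert class of the $k\times(m-l)$ rectangle, since at each step the only horizontal strip of $m-l$ boxes that can be added to a full rectangle is a complete new row. Because $k<l$, this rectangle fits inside the $l\times(m-l)$ box, so $\sigma_{((m-l)^k)}$ is a basis element and in particular nonzero; hence the Euler class $c_{m-l}(Q)^k$ is nonzero. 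Therefore $\psi$ cannot be nowhere-vanishing, and any zero $x$ of $\psi$ satisfies $f(x)\subset x$, completing the proof. I expect the only genuine subtleties to be the reduction of the top Chern class to $c_{m-l}(Q)^k$ and the nonvanishing of $\sigma_{m-l}^{k}$; the passage from ``nonzero Euler class'' to ``the section has a zero'' is the standard obstruction-theoretic fact that a nowhere-zero section of an oriented bundle forces the Euler class to vanish.
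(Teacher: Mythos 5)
Your proof is correct, but it takes a genuinely different route from the paper's. The paper works on the product $M\times N$ ($M=G_{m,l}$, $N=G_{n,k}$): it introduces the incidence variety $V=\{(x,y): y\subset x\}$, shows $V\to N$ is a $G_{m-k,l-k}$-bundle, proves via a transversality computation that the dual class $v$ of $V$ satisfies $v\cup[N]\ne 0$, and then derives a contradiction from relative cup products: if $\phi(\delta(M))\cap V=\emptyset$ (with $\phi=\mathrm{id}\times f$) then $v\cup\alpha_f=0$, while Theorem \ref{cohomology} forces $\phi_*$ to be independent of $f$, so one may replace $f$ by a constant map, giving $\alpha_f=[N]$ and hence $v\cup\alpha_f\ne 0$. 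You instead linearize the incidence condition: the section $\psi$ of $\hom(f^*\gamma,Q)$ vanishes at $x$ exactly when $f(x)\subset x$, Theorem \ref{cohomology} kills $c_i(f^*\gamma)$ so the Euler class collapses to $c_{m-l}(Q)^k$, and Pieri's rule identifies this with the rectangle class $\sigma_{((m-l)^k)}$, a nonzero basis element since $k<l$. All steps check: the splitting-principle reduction of $c_{k(m-l)}\bigl((f^*\gamma)^*\otimes Q\bigr)$ to $c_{m-l}(Q)^k$ is a legitimate specialization of the universal polynomial, your Pieri induction is right (adding a horizontal strip of full width $m-l$ to a full rectangle forces a complete new row), and the obstruction-theoretic fact that a nowhere-zero section splits off a trivial summand and kills the Euler class holds for topological bundles, so mere continuity of $f$ suffices. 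It is worth noting that the two nonvanishing computations are secretly the same: $\sigma_{((m-l)^k)}$ is Poincar\'e dual to $\{x\in M:\bc^k\subset x\}\cong G_{m-k,l-k}$, which is precisely the transversal intersection $V\cap i(M)$ in the paper's lemma. What your approach buys is economy: everything lives on $M$ alone, with no transversality argument, no dual class of $V$, and no relative-cohomology cup-product machinery; what the paper's correspondence approach buys is that it parallels the real Grassmannian argument of \cite{mukh-sank} and adapts to situations where the invariance condition does not present itself as the vanishing of a bundle map.
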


We point out that 
the classification of self-maps of a complex Grassmann manifold has been studied in terms of their induced 
endomorphisms of the cohomology algebra by several authors. See \cite{oniel}, \cite{brewster-homer}, \cite{glover-homer}, \cite{hoffman}, \cite{hoffman-homer}. Similar study of maps between two {\it distinct} (real) 
Grassmann manifolds seems to have been initiated in  \cite{ks}.  
Sankaran and Sarkar  \cite{ss} have 
studied the existence (or non-existence) of maps of non-zero degree  between two different complex (resp. quaternionic) 
Grassmann manifolds of the same dimension.  The same problem for oriented real Grassmann manifolds has been settled by Ramani and Sankaran \cite{rs}. 

Our methods are straightforward.  To prove Theorem \ref{cohomology}, we reduce the problem to one 
about endomorphism of 
the cohomology of a certain Grassmann manifold and appeal to a well-known result 
of Glover and Homer \cite{glover-homer}.   Theorem \ref{homotopy} is proved using a result 
due to Glover and Homer \cite{glover-homer2}, namely, 
any map between any two complex Grassmann manifolds---indeed complex flag manifolds---is formal. Our approach to the proof of Theorem \ref{invariantsubspace} 
is similar in spirit to that of \cite[Theorem 1.1]{mukh-sank}.

It has been conjectured that if $\phi$ is any endomorphism of the 
graded $\mathbb{Q}$-algebra $H^*(G_{n,k};\mathbb{Q})$ which is vanishes on $H^2(G_{n,k};\mathbb{Z})$, then $\phi$ vanishes 
in all positive degree.  See \cite{glover-homer}.  Our proof shows that the conjecture implies the validity of   
Theorems \ref{cohomology} and \ref{homotopy} hold without the restriction $m-l\ge 2k^2-k-1$.

\section{Proofs}

The  cohomology ring $H^*(G_{n,k};\mathbb{Z})$ of $G_{n,k}$ is well-known to be generated by 
the Chern classes 
$c_i(\gamma_{n,k})\in H^{2i} (G_{n,k};\mathbb{Z}), 1\le i\le k,$ of the canonical complex $k$-plane bundle $\gamma_{n,k}$.  Indeed, the cohomology ring has a presentation 
\[H^*(G_{n,k};\mathbb{Z})=\mathbb{Z}[c_1,\ldots, c_k]/\langle h_{n-k+1},\ldots, h_n \rangle\]
as the quotient of the polynomial ring modulo the ideal generated by the elements $h_j, n-k+1\le j\le n,$ 
where $|c_i|=2i$; here $h_r$ is defined as the $2r$-th degree term in the expansion of 
$(1+c_1+\cdots+c_k)^{-1}$.  Under the above isomorphism $c_i$ corresponds to the element $c_i(\gamma_{n,k})\in H^{2i}(G_{n,k};\bz), 1\le i\le k.$
We shall denote by $R_k$ the polynomial algebra $\mathbb{Z}[c_1,\ldots,c_k]$ and

The following are well-known facts concerning the cohomology ring:\\
(1) The cohomology group $H^r(G_{n,k};\mathbb{Z})$ is a free abelian group.  It is zero when $r$ is odd.  
This follows from the fact that $G_{n,k}$ admits a cell-structure with cells only in even dimensions. See, for 
example, \cite{griffiths-harris}.
  
(2)  The elements $h_{j}, n-k+1\le j\le n$, form a {\it regular sequence} in the polynomial algebra $R_k:=\mathbb{Z}[c_1,\ldots,c_k]$ for any $n\ge 2k$. That is, $h_{n-k+1}\ne 0$ and $h_{n-k+r}$ is a not a zero 
divisor in $R_k/\langle h_{n-k+1},\cdots,h_{n-k+r-1}\rangle, 2\le r\le k$.  See for example \cite{bott-tu}.   \\

(3)  The element $c_1^d\ne 0$ where $d=\dim_{\mathbb{C}}G_{n,k}=k(n-k)$.  This follows immediately 
from the fact that $G_{n,k}$ has the structure of a K\"ahler manifold with second Betti number $1$. 
In fact it is known $H^{2d}(G_{n,k};\mathbb{Z})$ is generated by the element $c_k^{n-k}$ and that $c_1^d=Nc_k^{n-k}$ where $N=(d!1!2!\cdots(k-1)!)/((n-k)!\cdots (n-1)!)$.  See \cite[\S14]{fulton}. \\

(4) The natural imbedding $i: G_{n,k}\subset G_{n+1,k}$ and $j:G_{n,k}\subset G_{n+1,k+1},$ defined by 
the natural inclusion of $U(n)$ in $U(n+1)$,  
induce surjections $i^*:H^*(G_{n+1,k};\mathbb{Z})\to H^*(G_{n,k},\mathbb{Z})$ and $j^*:H^*(G_{n+1,k+1};\mathbb{Z})\to H^*(G_{n,k};\mathbb{Z})$ where $i^*(c_r(\gamma_{n+1,k}))\mapsto c_r(\gamma_{n,k}), 1\le r\le k$ and 
$j^*(c_r(\gamma_{n+1,k+1}))=c_r(\gamma_{n,k})$ when $r\le k$ and $j^*(c_{k+1}(\gamma_{n+1,k+1}))=0$.
The homomorphism $i^*$ induces isomorphisms in cohomology in dimensions up to $2(n-k)$
and $j^*$ induces isomorphisms in cohomology in dimensions up to $2k$.  

\noindent
{\it Proof of Theorem \ref{cohomology}:}  
One has an inclusion $U(m-l+k)\subset U(m)$ where a matrix $X\in U(m-l+k)$ corresponds to the matrix in block diagonal form with diagonal blocks $X, I_{k-l}$. (Here $I_{k-l}$ denotes the identity matrix.) This induces an imbedding $G_{m-l+k,k}\subset G_{m,l}$. 
Similarly, since $m-l>n-k,$ we have the inclusion $U(n)\subset U(m-l+k)$ which induces  
an imbedding $G_{n,k}\subset G_{m-l+k,k}$.  These inclusions are merely compositions of appropriate inclusions considered in Fact (4) above.  
Let $\alpha:H^*(G_{m,l},\mathbb{Z})\to H^*(G_{m-l+k,k};\mathbb{Z})$ and $\beta:H^*(G_{m-l+k,k};\mathbb{Z})\to H^*(G_{n,k};\mathbb{Z})$ be the inclusion-induced homomorphisms.  It follows from Fact (4) that $\beta(c_i(\gamma_{m-l+k,k}))=c_i(\gamma_{n,k}), i\le k$.
Also,  $\alpha(c_i(\gamma_{m,l}))=c_i(\gamma_{m-l+k,k}),i\leq k$.   
Then we obtain an endomorphism of the graded ring $\alpha\circ\phi\circ \beta$ of $H^*(G_{m-l+k,k})$ where $\phi:H^*(G_{n,k};\mathbb{Z})\to H^*(G_{m,l};\mathbb{Z})$ is any graded ring homomorphism.  Note that 
our hypothesis on $k,l,m,n$ implies that $\dim G_{n,k}<\dim G_{m,l}$. Hence by Fact (3) above, 
$\phi(c_1(\gamma_{n,k}))=0$.  Therefore $\alpha\circ\phi \circ\beta(c_1(\gamma_{m-l+k,k}))=0$. 
Tensoring with $\mathbb{Q}$ we obtain a graded $\mathbb{Q}$-algebra endomorphism $H^*(G_{m-l+k,k};\mathbb{Q})\stackrel{\alpha\circ\phi\circ\beta}{\longrightarrow}H^*(G_{m-l+k,k};\mathbb{Q})$ which vanishes in 
degree $2$.  
Our hypothesis that $m-l\geq 2k^2-k-1$ or $k\le 3$ implies, by \cite{glover-homer}, that this endomorphism is zero in positive dimensions. \hfill $\Box$

We remark that Theorem \ref{cohomology} and the above proof hold when the coefficient ring $\mathbb{Z}$ 
is replaced by any subring of $\mathbb{Q}$ through out.  If $\phi$ is induced by a continuous map $f$, then 
$H^*(f;R)$ is zero for any commutative ring $R$.

Before taking up the proof of Theorem \ref{homotopy}, we recall the relation between the homotopy 
class of a map and the homomorphism it induces in cohomology with rational coefficients. 
We assume familiarity with basic notions in the theory of rational homotopy as in \cite{griffiths-morgan}.  (For a comprehensive treatment see \cite{fht}.) 
 
  Let $X$ be any simply connected finite CW complex and let $X_0$ denote its rationalization.  Thus 
  $\wt{H}^*(X_0;\mathbb{Z})\cong \wt{H}^*(X;\mathbb{Q}).$  If $f:X\to Y$ 
  is a continuous map of such spaces, then there exists a rationalization  of $f$, namely a continuous 
  map $f_0:X_0\to Y_0$ such that $f_0^*:\wt{H}^*(Y_0;\mathbb{Z})\to \wt{H}^*(X_0;\mathbb{Z})$ is the same as 
  $f^*:\wt{H}^*(Y;\mathbb{Q})\to \wt{H}^*(X;\mathbb{Q})$.   Denoting the minimal model of $X$ by 
$\mathcal{M}_X$, one has a bijection $[X_0,Y_0]\cong [\mathcal{M}_Y,\mathcal{M}_X], [h]\mapsto [\Phi_h]$ where on the left we have homotopy classes of continuous maps $X_0\to Y_0$ and on the right we have homotopy classes of differential graded commutative algebra homomorphisms of the minimal models $\mathcal{M}_Y\to \mathcal{M}_X$.  
In the case when $X=U(n)/(U(n_1)\times \cdots\times U(n_r)) $ is a complex flag manifolds, one knows that $X$ is K\"ahler and hence is {\it formal}, that is,  
there exists a morphism of differential graded commutative algebras $\rho_X:\mathcal{M}_X\to H^*(X;\mathbb{Q})$ which induces isomorphism in cohomology, 
where $H^*(X;\mathbb{Q})$ is endowed with the zero differential.  Moreover, it is known that when both 
$X$ and $Y$ are complex flag manifolds, any continuous map $f:X\to Y$ is {\it formal}, that is,  
the homotopy class of the morphism $f_0:X_0\to Y_0$ is determined by  
the graded $\mathbb{Q}$-algebra homomorphisms $h^*:H^*(Y;\mathbb{Q})\to H^*(X;\mathbb{Q})$.  More precisely, we have the following result.

\begin{theorem}(\cite[Theorem 1.1]{glover-homer2})\label{rigid}
Let $X,Y$ be complex flag manifolds.  Then $[h]\mapsto H^*(h;\mathbb{Q})$ establishes an isomorphism from $[X_0,Y_0]$ to the set of graded $\mathbb{Q}$-algebra 
homomorphisms   $H^*(Y;\mathbb{Q})\to H^*(X;\mathbb{Q})$. \hfill $\Box$
\end{theorem}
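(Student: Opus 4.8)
The plan is to reduce the statement, via the bijection $[X_0,Y_0]\cong[\mathcal{M}_Y,\mathcal{M}_X]$ recalled above together with the formality of $X$ and $Y$, to a purely algebraic rigidity statement: the assignment $[\Phi]\mapsto H^*(\Phi;\bq)$ from homotopy classes of differential graded commutative algebra maps $\mathcal{M}_Y\to\mathcal{M}_X$ to graded $\bq$-algebra homomorphisms $H^*(Y;\bq)\to H^*(X;\bq)$ is a bijection. Two structural features of complex flag manifolds will drive the argument. First, the rational cohomology of a flag manifold is a complete intersection, $H^*(Y;\bq)=\bq[x_1,\ldots,x_p]/(f_1,\ldots,f_q)$, with the $x_i$ in even degrees and $f_1,\ldots,f_q$ a regular sequence of decomposable elements; this is the analogue for general flag manifolds of Fact (2), and it is the source of formality. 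Consequently the bigraded minimal model takes the two-stage form $\mathcal{M}_Y=(\bq[x_1,\ldots,x_p]\otimes\Lambda(y_1,\ldots,y_q),d)$ with $dx_i=0$, the $y_j$ of odd degree, and $dy_j=f_j$. Second, by Fact (1) the target satisfies $H^{\mathrm{odd}}(X;\bq)=0$.

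\emph{Surjectivity.} Given a graded algebra homomorphism $\psi:H^*(Y;\bq)\to H^*(X;\bq)$, I would compose with the formality quasi-isomorphism $\rho_Y:\mathcal{M}_Y\to(H^*(Y;\bq),0)$ to obtain a DGA map $\psi\circ\rho_Y:\mathcal{M}_Y\to(H^*(X;\bq),0)$, and then lift it through $\rho_X:\mathcal{M}_X\to(H^*(X;\bq),0)$. Because $\mathcal{M}_Y$ is a minimal Sullivan algebra (hence cofibrant) and $\rho_X$ may be taken to be a surjective quasi-isomorphism, the lifting property furnishes $\Phi:\mathcal{M}_Y\to\mathcal{M}_X$ with $\rho_X\circ\Phi\simeq\psi\circ\rho_Y$; passing to cohomology and using that $H^*(\rho_X;\bq),H^*(\rho_Y;\bq)$ are the canonical identifications yields $H^*(\Phi;\bq)=\psi$. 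This step uses only formality of both spaces.

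\emph{Injectivity (the main step).} Suppose $\Phi_0,\Phi_1:\mathcal{M}_Y\to\mathcal{M}_X$ induce the same map on cohomology. I would construct a homotopy $H:\mathcal{M}_Y\to\mathcal{M}_X\otimes\Lambda(t,dt)$ between them by induction over the generators of $\mathcal{M}_Y$, ordered by degree. Over an even generator $x_i$ one needs a cocycle in $\mathcal{M}_X\otimes\Lambda(t,dt)$ restricting to $\Phi_0(x_i)$ and $\Phi_1(x_i)$ at the two endpoints; such a cocycle exists precisely because $[\Phi_0(x_i)]=H^*(\Phi_0;\bq)([x_i])=H^*(\Phi_1;\bq)([x_i])=[\Phi_1(x_i)]$. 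Over an odd generator $y_j$ one must extend the already-defined cocycle $H(dy_j)=H(f_j)$ to a primitive whose endpoints are the prescribed $\Phi_0(y_j)$ and $\Phi_1(y_j)$. Choosing any primitive $P$, the two discrepancies $\epsilon_0P-\Phi_0(y_j)$ and $\epsilon_1P-\Phi_1(y_j)$ are cocycles in $\mathcal{M}_X$ of the odd degree $|y_j|$; since $H^{\mathrm{odd}}(X;\bq)=0$ they are coboundaries $d b_0,\,d b_1$, and replacing $P$ by $P-d\!\left((1-t)b_0+tb_1\right)$ corrects both endpoints while preserving the differential. Thus the homotopy extends over every generator.

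\emph{Main obstacle.} The crux is the injectivity step: establishing the precise two-stage form of $\mathcal{M}_Y$ (so that the only odd generators are the $y_j$ with $dy_j=f_j$), and checking that the obstruction to extending the homotopy over each $y_j$ genuinely lies in $H^{|y_j|}(X;\bq)$, where the vanishing of odd-degree rational cohomology of the flag manifold $X$ can be invoked. The surjectivity input, namely the lifting property of minimal Sullivan algebras against an acyclic fibration, is standard once formality of both $X$ and $Y$ is in hand.
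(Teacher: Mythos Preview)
The paper does not prove this theorem: it is quoted from \cite[Theorem 1.1]{glover-homer2} and closed with a~$\Box$ immediately after the statement, so there is no in-paper argument to compare against. Your proposal is therefore not a reconstruction of anything in the present paper but rather a self-contained sketch of the Glover--Homer result.

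That said, your outline is essentially the standard route and is sound. A couple of remarks. For surjectivity you invoke that $\rho_X$ ``may be taken to be a surjective quasi-isomorphism''; for a flag manifold with the two-stage model you describe this is immediate (the even generators surject onto the algebra generators of $H^*(X;\bq)$ and the odd generators map to zero), so no extra factoring is needed. For injectivity your obstruction analysis is correct, but there is a slightly slicker way once formality of $X$ is used: replace the target $\mathcal{M}_X$ by $(H^*(X;\bq),0)$ and observe that any DGA map $\mathcal{M}_Y\to (H^*(X;\bq),0)$ must send each odd generator $y_j$ to $0$ (since $H^{\mathrm{odd}}(X;\bq)=0$) and hence is uniquely determined by the induced algebra map $H^*(Y;\bq)\to H^*(X;\bq)$; moreover, a homotopy $H:\mathcal{M}_Y\to H^*(X;\bq)\otimes\Lambda(t,dt)$ must send each $x_i$ to a closed even-degree element, which (again because $H^{\mathrm{odd}}(X;\bq)=0$) lies in $H^{\mathrm{even}}(X;\bq)\cdot\bq[t]$ and is therefore constant in $t$. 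This yields injectivity without the inductive endpoint correction. Either argument works; yours has the virtue of staying at the level of $\mathcal{M}_X$ throughout.
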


We now turn to the proof of Theorem \ref{homotopy}.\\

\noindent
{\it Proof of Theorem \ref{homotopy}:} 
By Theorem  \ref{cohomology} we know that any such $f^*$ is the trivial homomorphism 
(which is identity in degree zero and is zero in positive dimensions).   By the above theorem $f_0$ is null-homotopic.   This proves the second 
statement of Theorem \ref{homotopy}.   
The first statement follows from the second since there exists, up to homotopy,  
at most finitely many continuous map $f:G_{m,l}\to G_{n,k}$ having the {\it same} rationalisation $f_0$. (See \cite[\S12]{sullivan}.)
This completes the proof of Theorem \ref{homotopy}.  \hfill $\Box$

Next we turn to the proof of Theorem \ref{invariantsubspace}.  In the following proof, we use cohomology with rational coefficients although one may use integer coefficients.  

 We shall write $M, N$ respectively for $G_{m,l}$ and $G_{n,k}$.  Suppose that $1\le k<l$, $m-l\ge n-k$.  As usual we assume  that $2k\le n, 2l\le m$.  
Let 
$V\subset M\times N$ be the subspace $V:=\{(x,y)\in M\times N\mid 
y\subset x\}\subset M\times N$. (Recall that $\mathbb{C}^n=\mathbb{C}^n\oplus 0\subset \mathbb{C}^m$.)  
One has a map $\pi:V\to N$ that sends $(x,y)\in V$ to $y\in N$.  This is the projection of a fibre 
bundle over $N$ with fibre space $G_{m-k,l-k}$.  To see this, regard 
$V$ as a  
submanifold of the complex flag manifold $F=U(m)/U(k)\times U(l-k)\times U(m-l)=\{(A,B)\mid \dim_\bc A=k, \dim_\bc B=l-k, A\perp B, A,B\subset \bc^m\}$ where a point $(x,y)\in V$ is identified with the point $(y, x')\in F$ 
where $x'$ is the orthogonal complement of $y$ in $x$ so that $x'\perp y$ and $x=x'+y$. 
The projection map $p: F\to G_{m,k}$, defined as $(A,B)\mapsto A\in G_{m,k}$, of the $G_{m-k,l-k}$-bundle $\theta$ 
over $G_{m,k}$ maps $V$ onto $G_{n,k}\subset G_{m,k}$. In fact $V=p^{-1}(G_{n,k})$ and so $\pi:V\to G_{n,k}$ is the projection of the bundle $\theta|_{G_{n,k}}$.

As usual we denote by $[N]$ the generator of the top cohomology group $H^{2k(n-k)}(N;\bq)$.

\noindent 
\begin{lemma} Let $c=${ \em codim}${}_{M\times N}V=2k(m-l).$  
Let $v\in H^c(M\times N;\bq)$ denote the cohomology class dual to $j:V\hookrightarrow M\times N$.  Then $v\cup [N]\ne 0$ in $H^*(M\times N;\bq)$. 
\end{lemma}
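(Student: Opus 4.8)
The plan is to detect the class $v\cup[N]$ by pairing it, together with a suitably chosen companion class, against the fundamental class of $M\times N$, and then to push the computation onto a single fibre of $\pi\colon V\to N$. Throughout I regard $[N]$ as $p_2^*[N]\in H^*(M\times N;\bq)$, where $p_1,p_2$ denote the two projections of $M\times N$, and I write $\int_X$ for evaluation against the fundamental class of a closed oriented manifold $X$ (every space here is complex, hence canonically oriented, and $V$ is a closed complex submanifold, so its dual $v$ is well defined). Since $v$ is dual to $j\colon V\hookrightarrow M\times N$, it satisfies $\int_{M\times N}v\cup\alpha=\int_V j^*\alpha$ for every $\alpha$ of complementary degree. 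Hence it suffices to exhibit a single class $\beta\in H^*(M\times N;\bq)$ with $\int_{M\times N}v\cup[N]\cup\beta\ne 0$, for then $v\cup[N]$ cannot vanish.

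A dimension count fixes the degree of $\beta$: since $\dim_\br V=2(l-k)(m-l)+2k(n-k)$ and $\deg(v\cup[N])=2k(m-l)+2k(n-k)$, the complementary degree is $2(l-k)(m-l)$, which is exactly the real dimension of a fibre of $\pi$. I would take $\beta=p_1^*\mu$ for a class $\mu\in H^{2(l-k)(m-l)}(M;\bq)$ still to be chosen. Using $p_2\circ j=\pi$ one has $j^*[N]=\pi^*[N]$; as $[N]$ is the class dual to a point, $\pi^*[N]$ is the Poincar\'e dual in $V$ of a single fibre $\Phi_0:=\pi^{-1}(y_0)\cong G_{m-k,l-k}$. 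Integrating over the fibre therefore collapses the computation to
\[
\int_{M\times N}v\cup[N]\cup\beta=\int_V\pi^*[N]\cup j^*\beta=\int_{\Phi_0}\iota^*j^*\beta,
\]
where $\iota\colon\Phi_0\hookrightarrow V$ is the fibre inclusion.

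It remains to understand the composite $\psi:=p_1\circ j\circ\iota\colon\Phi_0\to M$. Fixing $y_0\in N$, the fibre $\Phi_0$ consists of the $l$-planes $x\supset y_0$, and $\psi$ sends such an $x$ to itself viewed in $G_{m,l}$; the identification $x\mapsto x/y_0$ realises $\Phi_0$ as $G_{m-k,l-k}$ inside $\bc^m/y_0\cong\bc^{m-k}$. The key point is the behaviour of $\psi$ on Chern classes: the tautological bundle restricts through an extension $0\to\underline{\bc^{k}}\to\psi^*\gamma_{m,l}\to\gamma_{m-k,l-k}\to 0$, the trivial sub-bundle being the constant plane $y_0$, whence $\psi^*c_i(\gamma_{m,l})=c_i(\gamma_{m-k,l-k})$ for all $i$. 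Since $H^*(\Phi_0;\bq)$ is generated by $c_1(\gamma_{m-k,l-k}),\dots,c_{l-k}(\gamma_{m-k,l-k})$, the map $\psi^*$ is surjective; in particular the top class of $\Phi_0$ is a polynomial in these generators and hence equals $\psi^*\mu$ for the corresponding polynomial $\mu$ in $c_1(\gamma_{m,l}),\dots,c_{l-k}(\gamma_{m,l})$. With this $\mu$ the displayed integral becomes $\int_{\Phi_0}\psi^*\mu=1\ne 0$, proving the lemma.

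I expect the identification of $\psi$ and the computation $\psi^*c_i(\gamma_{m,l})=c_i(\gamma_{m-k,l-k})$ to be the one substantive step, since everything else---the defining property of the Poincar\'e dual $v$ and integration over the fibre of $\pi$---is formal. The only care needed is to verify that $\psi$ really is the fibre inclusion followed by $p_1$, and that the constant subspace $y_0$ genuinely splits off a trivial summand; this is what forces the Chern classes to match and thereby guarantees that the top class of the fibre lies in the image of $\psi^*$.
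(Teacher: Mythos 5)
Your proof is correct, but it takes a genuinely different route from the paper's. The paper argues by geometric intersection: $[N]$ is dual to the slice $i(M)=M\times\{\bc^k\}$, one checks that $i(M)$ meets $V$ transversely, so $v\cup[N]$ is Poincar\'e dual to $V\cap i(M)\cong G_{m-k,l-k}$ (the fibre of $\pi$ over $\bc^k$, viewed inside $M$), and the lemma follows because this sub-Grassmannian represents a non-zero class in $H_{2(l-k)(m-l)}(M;\bq)$ --- an assertion the paper states without further justification. You avoid transversality entirely: you pull the computation back to $V$ via the defining property of the dual class, use that $\pi^*$ of the dual of a point is the dual of a fibre, and certify non-vanishing by explicitly producing a companion class $\mu$ with $\int_{\Phi_0}\psi^*\mu=1$, via the exact sequence $0\to\underline{\bc^k}\to\psi^*\gamma_{m,l}\to\gamma_{m-k,l-k}\to0$ and the resulting surjectivity of $\psi^*$ on rational cohomology. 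Both proofs pivot on the same geometric object (the fibre $G_{m-k,l-k}$ over the base point), and your Chern-class step is in effect a proof of exactly the fact the paper takes for granted: $\langle\mu,\psi_*[\Phi_0]\rangle=1$ shows $\psi_*[\Phi_0]\ne0$ in $H_*(M;\bq)$. What the paper's route buys is brevity and geometric transparency; what yours buys is self-containedness and an explicit numerical witness, at the cost of longer bookkeeping with $p_1,p_2,j,\iota$. The individual steps all check out: the complementary degree $2(l-k)(m-l)$ is computed correctly, $p_2\circ j=\pi$ gives $j^*p_2^*[N]=\pi^*[N]$, the pullback of the point class under the submersion $\pi$ is indeed dual to $\Phi_0$, and the sub-bundle $\underline{y_0}$ is genuinely trivial since $y_0$ is a fixed subspace, forcing $\psi^*c_i(\gamma_{m,l})=c_i(\gamma_{m-k,l-k})$ as you claim.
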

\begin{proof}
The cohomology class $[N]$ is dual to the submanifold $i: M\hookrightarrow M\times N$ where $i(x)=(x,\bc^k), x\in M$.  First we shall show that $i(M)$ intersects $V$ transversely.  Note that $i(M)\cap V=\{(x,\bc^k)\mid \bc^k\subset x\subset \bc^m\}\cong G_{m-k,l-k}$, which is the fibre over the point $\bc^k\in N$ of the 
bundle projection $\pi:V\to N$.  Therefore $T_{i(x)}V/T_{i(x)}(V)\cap T_{i(x)} i(M))\cong T_{\bc^l}N$.   Since 
$T_{i(x)}(M\times N)/T_{i(x)}M\cong T_{\bc^l}N$, follows that 
$i(M)$ intersects $V$ transversely. 
Therefore $v\cup [N]$ is dual to the submanifold $V\cap i(M)\subset M\times N$.  Since $V\cap i(M)\cong 
G_{m-k,l-k}\subset G_{m,l}=M$ represents a non-zero homology class in $H_{2(l-k)(m-l)}(M;\bq)\cong H_{2(l-k)(m-l)}(M;\bq)\otimes H_0(N;\bq)\subset H_{2(l-k)(m-l)}(M\times N;\bq)$, its Poincar\'e dual, which equals $v\cup [N]$, is a non-zero cohomology class in $H^{2d}(M\times N;\bq)$ where $d=k(m-l)+k(n-k)$. \end{proof}

\noindent
{\it Proof of Theorem \ref{invariantsubspace}:}  Consider the map $\phi:=id\times f:M\times M\to M\times N$ defined as  $\phi(x,y)=(x,f(y))$.  Denote by $\delta:M\to M\times M$ the diagonal map. 

We need to show that $\phi(\delta(M))\cap V\ne \emptyset$.  

Let $v\in H^*(M\times N;\mathbb{Q})$ denote the cohomology class dual to the manifold $V\subset M\times N$ 
and let $\Delta\in H^*(M\times M;\mathbb{Q})$ denote the diagonal class, i.e., the class dual to $\delta(M)\subset M\times M$.  
As is well-known $v$ is in the image of the inclusion-induced homomorphism $H^*(M\times N, M\times N\setminus V;\mathbb{Q})\to H^*(M\times N;\mathbb{Q})$. (See for example \cite[Chapter 11]{ms}.)   Using the naturality of cup-products and by considering the bilinear map $H^*(M\times N,M\times N\setminus \phi(\delta(M));\mathbb{Q})\otimes H^*(M\times N, M\times N\setminus V;\mathbb{Q})\stackrel{\cup}{\to} H^*(M\times N, M\times N\setminus (V\cap\phi(\delta(M)));\mathbb{Q})$ induced by the inclusion maps, it follows that if $V\cap \phi(\delta(M))=\emptyset$, then  $v\cup w=0$ for any $w\in H^+(M\times N,M\times N\setminus \phi(\delta(M));\bq).$ (See \cite[\S6, Chapter 5]{spanier}.)   In particular, this holds for the class $w$ that maps to the cohomology 
class $\alpha_f$ dual to the submanifold $\phi(\delta(M))\hookrightarrow M\times N$ under the inclusion-induced map 
$H^{2k(n-k)}(M\times N,M\times N\setminus \phi(\delta(M));\bq)\to H^{2k(n-k)}(M\times N;\bq)$.  Thus $v\cup 
\alpha_f=0.$  

On the other hand, $\mu_{M\times N}\cap\alpha_f=\phi_*(\delta_*(\mu_{M}))$.  Our hypothesis 
on $k,l,m,n$ implies, by Theorem \ref{cohomology}, that $\phi_*$ does not depend on $f$.  In particular, taking $f=c$, the constant map sending $M$ to $\bc^k\in N$, we obtain $\phi\circ \delta=i:M\hookrightarrow M\times N$ considered in the previous lemma. So $\phi_*\delta_*(\mu_{M})=i_*(\mu_M)$ and we have $\alpha_f=[N]$.  By 
the above lemma we have $v\cup \alpha_f=v\cup [N]\ne 0$, a contradiction. 
This completes the proof. \hfill $\Box$

 We conclude this paper with the following remark.
Suppose that $\dim (G_{n,k})\le \dim G_{m,l}$ and let $f:G_{m,l}\to G_{n,k}$ 
be a holomorphic map where we assume that $k\le n/2, l\le m/2$.  When $\dim (G_{n,k})=\dim G_{m,l}$ and  $k>1$, so that $G_{n,k}$ is not the projective space, it is was proved by Paranjape and Srinivas \cite{ps} that 
if $f$ is not a constant map, then $(n,k)=(m,l)$ and $f$ is an {\it isomorphism} of varieties. 

Suppose that   $\dim G_{n,k}<\dim G_{m,l}$.  We claim that any holomorphic map $f:G_{m,l} \to G_{n,k}$ is a constant map.  Indeed, the Picard group $Pic(G_{m,l})$ of $G_{m,l}$ of the isomorphism classes of complex (equivalently algebraic or holomorphic) line bundles is isomorphic to $H^2(G_{m,l};\bz)\cong \bz$ via the first  Chern class.  It is generated by the bundle $\xi_{m,l}:=\det(\gamma_{m,l})$.  The dual bundle $\xi_{m,l}^\vee$ is a very ample bundle (or a positive line bundle in the sense of Kodaira).    Note that any holomorphic map between non-singular complex projective manifolds is a 
morphism of algebraic varieties. Now our claim is a consequence of the 
following more general observation.

\begin{lemma} \label{ample}
Let $f:X\to Y$ be a morphism between two complex projective varieties where $Pic(X)$, group of isomorphism 
class of algebraic line bundles over $X$, is isomorphic to the infinite cyclic group. If $\dim X>\dim Y$, then $f$ 
is a constant morphism. 
\end{lemma}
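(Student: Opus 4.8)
The plan is to argue by contradiction, exploiting the rigidity forced by $\text{Pic}(X)\cong\mathbb{Z}$ together with a dimension count on intersection numbers. Write $N=\dim X$ and $M=\dim Y$, so that $N>M$, and take $X$ to be irreducible of pure dimension $N$. Since $Y$ is projective I would fix an ample line bundle $A$ on $Y$, and after replacing it by a suitable tensor power assume $A$ is very ample, so that it determines a closed embedding $Y\hookrightarrow\mathbb{P}^r$ via sections $s_0,\dots,s_r\in H^0(Y,A)$ having no common zero. Suppose, toward a contradiction, that $f$ is not constant.

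The key step is to show that the pullback $f^*A$ is the trivial bundle $\co_X$. To this end I would compute the top self-intersection number $(f^*A)^N$. By the projection formula this equals $\deg\bigl(A^N\cap f_*[X]\bigr)$, where $f_*[X]$ denotes the pushforward of the fundamental class of $X$. Since $\dim f(X)\le M<N$, the class $f_*[X]$ would lie in the group of $N$-cycles on the $M$-dimensional variety $Y$, which is zero; hence $(f^*A)^N=0$. On the other hand, since $\text{Pic}(X)$ is infinite cyclic I fix a generator $\xi$, and because $X$ is projective some power $\xi^{\otimes a}$ with $a\ne 0$ is ample, whence $a^N(\xi^N)=(\xi^{\otimes a})^N>0$ and in particular $(\xi^N)\ne 0$. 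Writing $f^*A=\xi^{\otimes m}$ then gives $m^N(\xi^N)=(f^*A)^N=0$, which forces $m=0$ and therefore $f^*A\cong\co_X$.

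To finish I would derive the contradiction from triviality of $f^*A$. The composite morphism $X\stackrel{f}{\to}Y\hookrightarrow\mathbb{P}^r$ is the one associated to the pulled-back sections $f^*s_0,\dots,f^*s_r$ of $f^*A\cong\co_X$. As $X$ is connected and projective, its only global regular functions are constants, so each $f^*s_i$ equals a constant $c_i$; moreover the $c_i$ are not all zero, since the $s_i$ have no common zero on $Y$ and $(f^*s_i)(x)=s_i(f(x))$. Hence the composite is the constant map with value $[c_0:\cdots:c_r]\in\mathbb{P}^r$, so $f$ itself is constant, contradicting our assumption.

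I expect the only delicate point to be the intersection-theoretic step. One must state the projection formula and the vanishing $(f^*A)^N=0$ at the right level of generality---working in the Chow groups, or equivalently using that $H^{2N}(Y;\mathbb{Q})=0$ for dimension reasons since $\dim_{\mathbb{R}}Y=2M<2N$---and one must make sure that $X$ may legitimately be taken irreducible of pure dimension $N$, so that the self-intersection pairing is well defined and genuinely detects the generator $\xi$ of $\text{Pic}(X)$ via $(\xi^N)\ne 0$. Everything else is standard: the passage to a very ample $A$, the projection formula, and the triviality of global functions on a connected projective variety.
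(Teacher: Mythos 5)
Your proposal is correct, but it takes a genuinely different route from the paper's. The paper argues by contradiction through \emph{ampleness}: assuming $f$ non-constant, it picks a projective curve $C\subset X$ on which $f$ is finite, so the pullback $\eta=f^*\xi$ of a very ample bundle on $Y$ is globally generated and non-trivial; since $\mathrm{Pic}(X)\cong\mathbb{Z}$ this forces $\eta$ to be a positive power of the ample generator, hence ample, contradicting the triviality of $\eta$ on a positive-dimensional fibre (which exists because $\dim X>\dim Y$). You instead prove outright that $f^*A$ is \emph{trivial}, via the numerical computation $(f^*A)^{N}=\deg\bigl(A^{N}\cap f_*[X]\bigr)=0$ (the pushforward $f_*[X]$ vanishing for dimension reasons) combined with $(\xi^{N})\neq 0$ for a generator $\xi$ of $\mathrm{Pic}(X)$, and you then conclude constancy of $f$ directly from the constancy of the pulled-back defining sections, now global functions on a connected projective variety. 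Both arguments lean on $\mathrm{Pic}(X)\cong\mathbb{Z}$ at the decisive moment, but in opposite directions: the paper upgrades ``globally generated and non-trivial'' to ``ample'', while you upgrade ``numerically trivial'' to ``trivial''. What your route buys: it avoids the curve-selection step and the theorem on fibre dimensions, ends in a direct rather than contradictory conclusion, and in fact proves the slightly sharper statement that $f$ is constant whenever $\dim f(X)<\dim X$. What it costs: you need the Snapper--Kleiman intersection-number formalism (or rational cohomology with fundamental classes) on possibly singular projective varieties, which the paper's more elementary line-bundle argument sidesteps. Your cautionary remarks are exactly the right ones, and they are harmless here: irreducibility of $X$ (needed both for $(\xi^{N})\neq 0$ and for $H^0(X,\mathcal{O}_X)=\mathbb{C}$) is implicit in the paper's convention that a variety is irreducible, a convention its own proof also uses tacitly in speaking of \emph{the} ample generator and of restrictions of ample bundles to subvarieties.
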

\begin{proof}  Suppose that $f$ is a non-constant morphism.  Then there exists a projective curve $C\subset X$ 
such that $f|_C$ is a finite morphism.
Let $\xi$ be a very ample line bundle over $Y$ and let $\eta=f^*(\xi)$.  Since $\xi$ is very ample, it is 
generated by its (algebraic) sections and so it follows that $\eta$ is also generated by sections.  Since $f|_C$ is a finite morphism, we see that $\eta|_C$ is ample, that is, some positive tensor power of $\eta|_C$ is very ample.  In particular $\eta$ is not trivial.  
Denote by $\omega$ the ample generator of $Pic(X)\cong \bz$ and let $\eta=\omega^{\otimes r}$ for some $r$.  Since $\eta$ is generated by its sections, we have $r\ge 0$.   Since $\eta$ is non-trivial, $r\ne 0$.  It follows that $r>0$ and 
$\eta$ is ample.  

On the other hand, since $\dim X>\dim Y$, some fibre $Z$ of $f$ is positive dimensional and the bundle $\eta|_Z$ 
is trivial.  This is a contradiction since the restriction of an ample bundle to a positive dimensional subvariety is ample and non-trivial.   
\end{proof}
 
\noindent
{\bf Acknowledgments:}  We thank Prof. D. S. Nagaraj for his help in the formulation of Lemma \ref{ample}.   
   
% In particular, it is generated by (holomorphic) sections, and, since the Picard group is infinite cyclic, any line bundle that is generated by sections is either trivial or is isomorphic to a positive tensor power of $\xi_{n,k}^\vee$.  Suppose that $f$ is holomorphic.  
% Then $f^*(\xi_{n,k}^\vee)=:\eta$ is also generated by sections.  If $f$ is not constant, then one can find a projective curve $C$ such that $f|_C$ 
%is a finite morphism.  In particular, $\eta|_C$ is an ample bundle over $C$.  Hence $f^*(\xi_{n,k}^\vee)$ 
%cannot be the trivial bundle and so it has to isomorphic to a positive tensor power of $\xi_{m,l}^\vee$.  It follows that $f^*(c_1(\gamma_{n,k}))\neq 0$.  On the other hand, 
 %$f^*:\bz\cong H^2(G_{n,k};\bz)\to H^2(G_{m,l};\bz)\cong \bz$ is zero since the height of $c_1(\gamma_{n,k})$, which equals the dimension $\dim G_{n,k},$ is {\it less} than that of $c_1(\gamma_{m,l})$.  Therefore 
 %$f$ has to be a constant map.

\end{document}